\newtheorem{theorem}{Theorem}[section]
\newtheorem{lemma}[theorem]{Lemma}
\newtheorem{corollary}[theorem]{Corollary}
\theoremstyle{definition}
\theoremstyle{remark}
\newtheorem{remark}[theorem]{Remark}
\numberwithin{equation}{section}
\newcommand{\Ke}{\operatorname{Ker}}
\newcommand{\rk}{\operatorname{rk}}
\newcommand{\Sing}{\operatorname{Sing}}
\newcommand{\Pic}{\operatorname{Pic}}
\begin{document}

\title[Martens--Mumford's Theorems for ...]{
Martens--Mumford's Theorems for Brill-Noether Schemes arising from very ample Line bundles}

\author[A. Bajravani]{Ali Bajravani}
\address[]{Department of Mathematics, Faculty of Basic Sciences, Azarbaijan Shahid Madani University, Tabriz, I. R. Iran.\\
P. O. Box: 53751-71379}

\email{bajravani@azaruniv.edu}

\maketitle

\begin{abstract}

Tangent spaces of $V_{d}^{r}(L)$'s, specific subschemes of $C_{d}$ arising from various line bundles $L$ on $C$, are described. Then we proceed to prove Martens theorem for these schemes, by which we determine curves $C$, which for some very ample line bundle $L$ on $C$ and some integers $r$ and $d$ with $d\leq h^{0}(L)-2$, the scheme $V_{d}^{r}(L)$ might attain its maximum dimension. \\
   
\noindent \textbf{Keywords:}  Marthens--Mumford's Theorems; Symmetric Products; Very Ample Line Bundle.

\noindent \textbf{MSC(2010):}  14H99; 14H51.
\end{abstract}

\section{\bf Introduction}
 For a smooth projective algebraic curve $C$ of genus $g$ let $C_{d}$ denote $d$-th symmetric product of $C$. For non-negative integers $r$, $d$ with $0\leq 2r\leq d$, the closed subscheme $C^{r}_{d}\subset C_{d}$ parameterizes the locus of divisors of degree $d$ and with space of global sections  of dimension at least $r+1$, on $C$.  The scheme $C^{r}_{d}$ can be described locally as the locus of divisors for which the rank of the Brill-Noether matrix does not exceed $d-r$. This globalizes to give the well known scheme structure on $C^{r}_{d}$, as the $(d-r)$-th determinantal scheme of the morphism:
 $$u_{*}: \Theta_{C_{d}}\rightarrow \Theta_{Pic^{d}(C)}.$$
 See \cite[Chapter $4$]{ACGH}  for details and notations.
 
 For an arbitrary line bundle $L$ on $C$ this construction can be generalized to give similar subshemes of $C_{d}$, denoted by $V^{r}_{d}(L)$, and the usual Brill-Noether schemes, $C^{r}_{d}$'s, are special cases when $L$ is substituted by the canonical line bundle. See \cite[Chapter VII]{ACGH}. The schemes, $V^{r}_{d}(L)$'s, have been re-appeared recently in \cite{A-S}, where the authors study Koszul Cohomologies of Curves. As a byproduct of their interesting paper, they prove:
 \begin{lemma}\label{Lemma0}(Aprodu-Sernesi)
 If $L$ is a line bundle on $C$ and $d\geq 4$, then $V^{1}_{d}(L)$ is non-empty and of pure dimension $d-2$.
 \end{lemma}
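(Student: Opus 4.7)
The plan is to realise $V^1_d(L)$ as a determinantal subscheme of $C_d$ of expected codimension $2$, combine the universal lower bound on components of determinantal loci with an explicit parametric upper bound to force the codimension to equal $2$ exactly, and then invoke the Hochster--Eagon theorem to deduce purity of dimension $d-2$. Non-emptiness will follow from a Porteous cohomology-class argument or from an explicit secant/pencil construction.

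For the setup, let $\mathcal{D}\subset C\times C_d$ be the universal divisor with second projection $\pi$, and form the evaluation map
$$\mu_L\colon H^0(C,L)\otimes\mathcal{O}_{C_d}\longrightarrow \pi_*\bigl((L\boxtimes\mathcal{O}_{C_d})|_{\mathcal{D}}\bigr),$$
whose target has rank $d$ and whose fibrewise kernel at $D\in C_d$ is $H^0(L(-D))$. By definition, $V^1_d(L)$ is the rank-drop locus of $\mu_L$; a local computation with the Brill--Noether matrix (cf.~\cite[Ch.~IV]{ACGH}) shows that the expected codimension is $2$, and the classical lower bound on components of determinantal loci (\cite[Ch.~II]{ACGH}) gives codimension at most $2$ at every point.

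For the matching upper bound $\dim V^1_d(L)\leq d-2$, I would use the incidence variety
$$\mathcal{I}=\bigl\{(V,D)\in G(2,H^0(L))\times C_d\,:\,V\subset H^0(L(-D))\bigr\}$$
with its two projections. The second projection surjects onto $V^1_d(L)$ with generically one-point fibres; projection to the first factor exhibits $\mathcal{I}$ as a family over $G(2,H^0(L))$ whose fibres parametrise the degree-$d$ sub-divisors of the intersection of the two members of the pencil $V$. A direct dimension count combining the codimension of the stratum of pencils with sufficiently large base locus and the fibre dimensions yields $\dim\mathcal{I}\leq d-2$. Together with the lower bound this forces codimension exactly $2$, and by Hochster--Eagon the scheme $V^1_d(L)$ is Cohen--Macaulay, hence pure of dimension $d-2$. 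Non-emptiness for $d\geq 4$ then follows either by checking Porteous' formula, or explicitly by taking $D=p+q+E$ with $p+q$ lying on a suitable secant of $C$ under $|L|$ and $E$ a general divisor of degree $d-2$.

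The main obstacle is the upper bound, since the determinantal theory alone provides only the lower bound. The parametrisation by pencils must be carried out so that the dimensions of the space of relevant pencils in $G(2,H^0(L))$ and of the divisor fibres sum exactly to $d-2$; the hypothesis $d\geq 4$ is precisely what permits the combined Brill--Noether and Martens estimates to achieve this accounting, and it rules out the low-$d$ pathologies where $V^1_d(L)$ could acquire excess components of smaller codimension and thereby fail to be pure.
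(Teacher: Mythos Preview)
The paper does not give its own proof of this lemma; it is quoted from Aprodu--Sernesi \cite{A-S} without argument. (Later, Theorem~\ref{thm2}(b) and Corollary~\ref{Cor} recover the upper bound $\dim V^1_d(L)\le d-2$ for very ample $L$ via an incidence correspondence in $(\mathbf{P}_L)^*\times C_d$ and the general position lemma, but the paper does not re-prove non-emptiness or the lower bound.) That said, your proposal has a genuine gap.

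Your central claim, that the determinantal description of $V^1_d(L)$ has expected codimension $2$ in $C_d$, is false for an arbitrary line bundle $L$. With $f_L\colon H^0(L)\otimes\mathcal{O}_{C_d}\to\mathcal{L}$ a map from a bundle of rank $h^0(L)$ to one of rank $d$, the locus where the rank drops to $d-1$ has expected codimension $(h^0(L)-(d-1))(d-(d-1))=h^0(L)-d+1$, not $2$. This equals $2$ only when $h^0(L)=d+1$; for larger $h^0(L)$ the determinantal lower bound yields only $\dim V^1_d(L)\ge 2d-h^0(L)-1<d-2$, and Hochster--Eagon is inapplicable unless the codimension is exactly the expected one. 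Conversely, if $h^0(L)\le d-1$ then every $D\in C_d$ imposes at most $d-1$ conditions, so $V^1_d(L)=C_d$ has dimension $d$ and the assertion fails outright; the result in \cite{A-S} carries hypotheses (very ampleness and a relation between $d$ and $h^0(L)$) that the formulation here has suppressed. Your incidence variety $\mathcal{I}\subset G(2,H^0(L))\times C_d$ likewise presupposes $h^0(L(-D))\ge 2$, hence $h^0(L)\ge d+1$, and your explicit non-emptiness construction presupposes that $|L|$ embeds $C$.

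Even with these hypotheses restored, the upper bound via $\mathcal{I}$ is nearly circular. Fibering over $G(2,H^0(L))$, the fibre over a pencil $V$ consists of degree-$d$ subdivisors of the base locus of $V$, which for a general pencil in a base-point-free system is empty. You must therefore stratify $G(2,H^0(L))$ by base-locus length, but bounding the stratum of pencils with base locus of length at least $d$ is essentially the original problem. The paper's own upper-bound argument (Theorem~\ref{thm2}(b)) avoids this by working with hyperplanes rather than pencils and invoking the general position lemma to make the projection to $(\mathbf{P}_L)^*$ finite and non-surjective.
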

 Lemma \ref{Lemma0}, although is not one of main themes of paper \cite{A-S}, basically is the starting point of our research. 
 We study the projective geometric aspects of $V^{r}_{d}(L)$'s.  
 First in section \ref{Tangent Space computations}, we describe the tangent spaces of $V^{r}_{d}(L)$. 
  As well $V^{r}_{d}(L)$ is described as intersection of $V^{r}_{d}(H)$'s, where $H$ moves on the set of sub-line bundles of $L$.

One of the essential ingredients in Brill-Noether Theory is the Martens, as well as Mumford's, Theorem.  See Theorems \ref{Martens} and \ref{Mumford}.  We prove Martens theorem for $V^{r}_{d}(L)$'s in section \ref{Martens--Mumford's theorems}, Theorem \ref{thm2}. This generalizes Theorem \ref{Martens} when $K_{C}$ is substituted by an arbitrary  very ample line bundle $L$. An existing proof of Martens theorem for $L=K_{C}$, as it can be found in \cite{ACGH}, uses tangent spaces of $C^{r}_{d}$'s together with the fact that
$C$ admits a finite number of theta charactersitics. The last part of this proof is not applicable when $L$ is different from the canonical line bundle. To overcome to this obstacle, when $C$ is non-hyper elliptic, we take a different approach which is based on taking an incidence correspondence and counting dimensions, see Theorem \ref{thm2}. 
As a byproduct we reobtain the dimension part of \cite[Lemma 2.1]{A-S} when $L$ is very ample.

During theorem \ref{thm3} we 
extend the well known Mumford's Theorem to $V^{r}_{d}(L)$'s. While we adopt a part of proof of the Mumford's Theorem somehow, our proof is essentially different from existing proof of Mumford's Theorem. Actually Lemmas \ref{lemma4} and \ref{lemma5}, as our basic tool, furnish the way to prove theorem \ref{thm3}.

   Remember that Keem's Theorem determines curves $C$, which for them under some circumstances $\dim C^{r}_{d}\geq d-r-2$, to be $4$-gonal. See for example \cite{ACGH} or \cite{Keem}. 
   Through remark \ref{Remark}, we see that occurance of two extra type of curves in Theorem \ref{thm3} goes back originally to Keem's Theorem.

 During, we follow notations of
  \cite{ACGH}. Particularly
 we denote by $V^{1}_{d}(L)$ what they denote by $V^{r-q+1}_{r-q+2}(L)$ or by $V^{r-1}_{r}(L)$ in \cite{A-S}.

\section{\bf Preliminaries and backgrounds}
For a smooth projective algebraic curve $C$, let $\pi_{1}$ and $\pi_{2}$ be the projections from $C\times C_{d}$ to $C$ and $C_{d}$ respectively.  
Then for a line bundle $L$ on $C$, the coherent sheaf 
$$\mathcal{L}:=(\pi_{2})_{*}(\mathcal{O}_{\Delta}\otimes \pi_{1}^{*}L),$$
where $\Delta \subset C\times C_{d}$ is the universal divisor of degree $d$, is a vector bundle of rank $d$ on $C_{d}$. Moreover for $D\in C_{d}$ we have the identifications 
\[\mathcal{L}_{D}\cong H^0(C, L\otimes \mathcal{O}_D) \cong H^0(C, L/L(-D)).\]
The natural map $\pi_{1}^*L\rightarrow \mathcal{O}_{\Delta}\otimes \pi_{1}^{*}L$ induced by restriction on $\Delta$, which is a map of vector bundles on $C\times C_{d}$, pushes forward via $\pi_{2}$ to a map of vector bundles on $C_{d}$:
\[f_{L} : H^0(C, L)\otimes \mathcal{O}_{C_{d}}\rightarrow \mathcal{L}.\]
Assuming $L$ to be a line bundle of degree $\delta$ and the space of global sections of dimension $s+1$,
 the map $f_{L}$ would be a map of vector bundles of ranks $s+1$ and $d$ respectively. For a non negative integer $r$ set
 \[V^{r}_{d}(L):=\lbrace D\in C_{d}\mid \rk(f_{L})_{D} \leq d-r \rbrace.\]
 The subscheme $V^{r}_{d}(L) \subset C_{d}$ parameterizes those effective divisors of degree $d$ 
 on $C$ that impose at most $d-r$ conditions on $\mid L \mid$, as well it is expected to be of dimension $d-r(s+1-d+r)$.\\
  For $L=K_{C}$, as it is commonly used in literature, the scheme $V^{r}_{d}(K_{C})$ will be denoted by $C^{r}_{d}$. Let
  $$
 \begin{array}{ccc}
 \alpha: C_{d}&\rightarrow & \Pic ^{d}(C)\\
 D & \mapsto &\mathcal{O}(D),
 \end{array} 
 $$
 be the Abel map and set $W^{r}_{d}(C):=\alpha (C^{r}_{d})$. Using the notion of Poincare line bundle, the subset $W^{r}_{d}(C)\subset \Pic ^{d}(C)$ admits the structure of a closed subscheme. Furthermore 
   Martens Theorem gives an upper bound for $\dim W^{r}_{d}(C)$, as well as Mumford's Theorem classifies curves $C$ for which $\dim W^{r}_{d}(C)$ attains its maximum value for some integers $r$ and $d$. 
   
\begin{theorem}\label{Martens}
(Martens) Let $C$ be a smooth curve of genus $g\geq 3$. Let $d$ be an integer such that $2\leq d\leq g-1$ and let $r$ be an integer such that $0<2r\leq d$. Then if $C$ is non hyper-elliptic, every component of $W^{r}_{d}(C)$ has dimension at most equal to $d-2r-1$.
 If $C$ is hyper-elliptic, then $\dim W^{r}_{d}(C)=d-2r$.
\end{theorem}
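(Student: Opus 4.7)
My plan is to prove Martens' theorem via the standard tangent-space computation for $W^{r}_{d}(C)$, combined with the base-point-free pencil trick, Clifford's theorem applied to $L^{\otimes 2}$, and the finiteness of theta characteristics. I would induct on $r$, using $W^{r+1}_{d}(C) \subseteq W^{r}_{d}(C)$ to reduce to the case where the generic point $L$ of a component $X \subseteq W^{r}_{d}(C)$ satisfies $h^{0}(L) = r+1$. At such a smooth point the Zariski tangent space is
\[
\dim T_{L} W^{r}_{d}(C) \;=\; g - \rk(\mu_{0,L}), \qquad \mu_{0,L}\colon H^{0}(L) \otimes H^{0}(K_{C} \otimes L^{-1}) \longrightarrow H^{0}(K_{C}),
\]
so the task reduces to lower-bounding $\rk(\mu_{0,L})$.

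After absorbing any base points of $|L|$, I would pick a base-point-free pencil $V \subseteq H^{0}(L)$. Twisting the Koszul sequence $0 \to L^{-1} \to V \otimes \mathcal{O}_{C} \to L \to 0$ by $K_{C} \otimes L^{-1}$ and taking cohomology identifies the kernel of $\mu_{0,L}|_{V \otimes H^{0}(K_{C} \otimes L^{-1})}$ with $H^{0}(K_{C} \otimes L^{-2})$, and Riemann--Roch together with Serre duality then yields
\[
\rk(\mu_{0,L}) \;\geq\; g + 2r + 1 - h^{0}(L^{\otimes 2}).
\]
Clifford's theorem for $L^{\otimes 2}$ (of degree $2d \leq 2g - 2$) gives $h^{0}(L^{\otimes 2}) \leq d + 1$, with equality only at the Clifford-extremal bundles. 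In the non-hyperelliptic case these extremals are either $\mathcal{O}_{C}$ or $K_{C}$; the former is ruled out by $d \geq 2$, and $L^{\otimes 2} = K_{C}$ forces $L$ to be a theta characteristic, of which there are only finitely many. Hence for a generic point of any positive-dimensional component one has $h^{0}(L^{\otimes 2}) \leq d$, giving $\rk(\mu_{0,L}) \geq g - d + 2r + 1$ and therefore $\dim X \leq d - 2r - 1$. In the hyperelliptic case, the explicit sublocus $r \cdot g^{1}_{2} + W_{d-2r}(C) \subseteq W^{r}_{d}(C)$ realizes dimension $d - 2r$, while the tangent-space estimate with $h^{0}(L^{\otimes 2}) = d + 1$ supplies the matching upper bound.

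The main obstacle I anticipate is the bookkeeping around base points of $|L|$: if $|L|$ is not base-point-free, the pencil trick must be applied to the moving part $|L - B|$ of degree $d - \deg B$, and one must verify that the Clifford inequality applied to the resulting bundle still yields the sharp estimate. A secondary issue is absorbing components of $W^{r}_{d}(C)$ whose generic member lies in $W^{r+1}_{d}(C)$; by the inductive hypothesis these satisfy $\dim W^{r+1}_{d}(C) \leq d - 2r - 3$ in the non-hyperelliptic case, strictly below the target dimension, and in the hyperelliptic case they are translates $(r+1) \cdot g^{1}_{2} + W_{d-2r-2}(C)$ whose dimension also stays below $d - 2r$.
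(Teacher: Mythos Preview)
The paper does not supply its own proof of this statement: immediately after stating Theorems~\ref{Martens} and~\ref{Mumford} it writes ``See \cite{ACGH} and \cite{Mumford} for proof,'' and in the introduction it explicitly summarizes the cited argument as one that ``uses tangent spaces of $C^{r}_{d}$'s together with the fact that $C$ admits a finite number of theta characteristics.'' Your proposal is exactly that argument---tangent space formula, base-point-free pencil trick, Clifford on $L^{\otimes 2}$, and the finiteness of theta characteristics to dispose of the extremal case---so you are reproducing precisely the proof the paper invokes by reference.
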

\begin{theorem} \label{Mumford}
(Mumford) Let $C$ be a smooth non-hyper elliptic curve of genus $g\geq 4$.
Suppose that there exist integers $r$ and $d$ such that $2\leq d\leq g-2$, $d\geq 2r>0$ and a component $X$ of $W^{r}_{d}(C)$ with
$\dim X=d-2r-1$. Then $C$ is either trigonal, bielliptic or a smooth plane quintic.
\end{theorem}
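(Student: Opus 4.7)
The plan is to prove Mumford's theorem by reducing, via induction on $r$, to the case of a one-parameter family of pencils on $C$, and then classifying non-hyperelliptic curves that carry such a family of base-point-free $g^{1}_{k}$'s with small $k$. The guiding principle is that an extremal Brill--Noether component forces an unusually rich supply of special divisors, and this rigidity is only compatible with one of the three exceptional geometric structures listed in the statement.

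For the inductive step I would consider the difference morphism
\[\mu : W^{r}_{d}(C) \times C \longrightarrow W^{r-1}_{d-1}(C), \qquad (L,p)\longmapsto L(-p),\]
whose generic fibre is finite. Starting from an extremal component $X \subset W^{r}_{d}$ of dimension $d - 2r - 1$, the image $\mu(X\times C)$ has dimension at least $d - 2r$, while Martens' theorem forces $\dim W^{r-1}_{d-1} \leq (d-1) - 2(r-1) - 1 = d - 2r$. Hence extremality propagates, and iterating reduces the problem to the case $r=1$, i.e.\ an extremal component $X \subset W^{1}_{k}$ of dimension $k-3$ with $2 \leq k \leq g-1$. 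A semicontinuity argument, together with the base-point-free pencil trick, ensures that a general $L \in X$ satisfies $h^{0}(L)=r+1$ and that the family does not collapse under the reductions.

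In the reduced situation, a general $L \in X \subset W^{1}_{k}$ strips to a base-point-free pencil $L_{0}$ of degree $k' \leq k$, providing a covering $\phi_{L_{0}}: C \to \mathbb{P}^{1}$ of degree $k'$. Two generic members $L_{0}, L_{0}'$ of the family together determine a morphism $C \to \mathbb{P}^{1} \times \mathbb{P}^{1}$, and the Castelnuovo--Severi inequality applied to this morphism, combined with the hypotheses $g \geq 4$ and $d \leq g-2$, forces $k' \leq 4$. The case $k' = 3$ yields a trigonal $C$. The case $k' = 4$ splits into two: either the morphism to $\mathbb{P}^{1} \times \mathbb{P}^{1}$ factors through a genus-one curve $E$, so $C \to E$ is a double cover and $C$ is bielliptic; or the image is a curve of bidegree $(4,4)$ whose projection lands in $\mathbb{P}^{2}$ as a smooth quintic, and $C$ is the smooth plane quintic.

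The most delicate step is the case analysis for $k' = 4$, namely separating the bielliptic and plane quintic geometries. I would attack it with the base-point-free pencil trick, computing the kernel of the multiplication map $H^{0}(L_{0}) \otimes H^{0}(L_{0}') \to H^{0}(L_{0} \otimes L_{0}')$ and reading off whether the product linear system is composed with a covering of lower genus or embeds $C$ as a plane curve. Controlling the interaction between the two varying pencils is what does the real work, and it is where the restrictions on $g$ and $d$ are genuinely used; all other steps---the initial reduction, the degree bound $k' \leq 4$, and the identification of the trigonal case---are comparatively mechanical consequences of Martens' theorem and standard Brill--Noether manipulations.
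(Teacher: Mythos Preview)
The paper does not supply its own proof of this statement. Immediately after stating Theorems~\ref{Martens} and~\ref{Mumford} it writes ``See \cite{ACGH} and \cite{Mumford} for proof of Theorems~\ref{Martens} and~\ref{Mumford}.'' Mumford's theorem is quoted purely as background; the paper's contribution is the analogue for the schemes $V^{r}_{d}(L)$ (Theorem~\ref{thm3}), whose proof proceeds by an entirely different route through Lemmas~\ref{lemma4} and~\ref{lemma5}. So there is no in-paper argument to compare your proposal against.

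That said, your outline has genuine gaps on its own terms. The reduction to $r=1$ via $(L,p)\mapsto L(-p)$ is sound, but the step ``the Castelnuovo--Severi inequality \dots\ forces $k'\leq 4$'' is backwards. For two base-point-free pencils of degree $k'$ that do not factor through a common cover, Castelnuovo--Severi gives $g\leq (k'-1)^{2}$, an \emph{upper} bound on $g$; combining this with $k'\leq k\leq g-2$ yields $k'\geq 4$, not $k'\leq 4$. No upper bound on $k'$ comes out of this, so the case analysis never gets started. The classical argument in \cite{ACGH} instead pushes the extremal component down in degree (adding and subtracting points and invoking Martens at each stage) until one reaches a $g^{1}_{3}$, a one-parameter family of $g^{1}_{4}$'s, or a $g^{2}_{5}$, and only then reads off the three cases. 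Your identification of the plane quintic is also off: a curve of bidegree $(4,4)$ on $\mathbf{P}^{1}\times\mathbf{P}^{1}$ has arithmetic genus $9$ and degree $8$ under the Segre map, so no projection of it is a plane quintic. The smooth plane quintic ($g=6$) actually enters because its $g^{2}_{5}$ produces the one-parameter family $\{\,g^{2}_{5}(-p):p\in C\,\}\subset W^{1}_{4}$, detected in the standard proof via the base-point-free pencil trick when $h^{0}(L_{0}\otimes L_{0}')\geq 3$.
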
 

\noindent  See \cite{ACGH} and \cite{Mumford} for proof of Theorems \ref{Martens} and \ref{Mumford}.
Based on Mumford's Theorem, we call a curve $C$ of Mumford's type if either it is bielliptic, trigonal or a smooth plane quintic. 
 
 \section{\bf Tangent Space computations for $V^{r}_{d}(L)$}\label{Tangent Space computations}
  
Let $\lbrace \omega^{L}_{1}, \omega^{L}_{2}, \cdots \omega^{L}_{s+1}  \rbrace$ be a basis for $H^{0}(C, L)$ and $\phi^{L}: C_{d}\rightarrow M_{d\times (s+1)}$ be the map defined by
\[\phi^{L}(\sum_{i=1}^{i=d} q_{i})=(\omega^{L}_{t}(q_{i}))_{i,t}\]
where $M_{d\times (s+1)}$
is the space of $d$ by $
(s+1)$- matrices. For $D\in C_{d}$ setting $A=\phi^{L}(D)$ the restriction map
$\alpha_{L}: H^0(C, L) \rightarrow H^0(C, L\otimes \mathcal{O}_{D})$,
is represented by $A$. As a consequence of this fact; for $\nu \in T_{D}(C_{d})$ one might identify $\phi^{L}_{*}(\nu).\ker(A)$
with $\beta_{L}(\nu\otimes H^0(C, L(-D)))$, where $\beta$ is the cup product homomorphism
\[H^0(C, \mathcal{O}_{D}(D))\otimes H^0(C, L(-D)) \rightarrow H^0(C, L\otimes\mathcal{O}(D)).\]

\begin{lemma}\label{lem2}
(a) If $D$ belongs to $V^r_{d}(L)\setminus V^{r+1}_{d}(L)$, the tangent space to $V^r_{d}(L)$ at $D$
is 
\[T_{D}(V^{r}_{d}(L))=(Im (\alpha_{L}\mu_{0}^{L}))^{\bot}\]
where 
  $\mu_{0}^{L}$ is the cup product map
  \[\mu_{0}^{L}: H^0(C, \mathcal{O}(D))\otimes H^0(C, L(-D)) \rightarrow H^0(C, L).\]
(b) If $D\in V^{r+1}_{d}(L)$ then 
$T_{D}(V^r_{d}(L))= H^0(C, L\otimes \mathcal{O}_{D})$.
Particularly, if $V^r_{d}(L)$ has the expected dimension and $d< s+1+r$, then $D\in \Sing(V^r_{d}(L))$. 
\end{lemma}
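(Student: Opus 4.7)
The plan is to treat both parts as instances of the general tangent-space formula for determinantal loci, applied to the morphism $f_L: H^0(C,L)\otimes \mathcal{O}_{C_d}\to \mathcal{L}$ whose $(d-r)$-th degeneracy locus is $V^r_d(L)$. The essential input has already been assembled in the paragraph preceding the lemma: for $\nu\in T_D(C_d)$, the derivative $df_L(\nu)$ sends any $\sigma\in \mathrm{Ker}(\alpha_L) = H^0(C,L(-D))$ to the cup product $\beta_L(\nu\otimes\sigma)$, computed modulo $\mathrm{Im}(\alpha_L)$. I will combine this with the standard formula for tangent spaces at rank-$k$ points of a degeneracy locus.

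For part (a), I would start from the fact that $D\in V^r_d(L)\setminus V^{r+1}_d(L)$ forces $\rk(f_L)_D=d-r$ exactly, so the standard determinantal formula (cf.\ \cite{ACGH}, Ch.\ II) yields
\[
T_D(V^r_d(L)) = \{\nu\in T_D(C_d)\,:\, df_L(\nu)(\mathrm{Ker}(\alpha_L))\subseteq \mathrm{Im}(\alpha_L)\}.
\]
Applying the cup-product identification, this becomes the condition that $\beta_L(\nu\otimes\sigma)\in \mathrm{Im}(\alpha_L)$ for every $\sigma\in H^0(L(-D))$. I would then pass to the dual picture via the natural pairing between $T_D(C_d)=H^0(\mathcal{O}_D(D))$ and $H^0(L\otimes \mathcal{O}_D)$ coming from Serre duality on the finite scheme $D$; under it the above containment translates precisely into the vanishing of $\nu$ against every element of $\mathrm{Im}(\alpha_L\circ \mu_0^L)\subseteq H^0(L\otimes \mathcal{O}_D)$, i.e., $\nu\in (\mathrm{Im}(\alpha_L\mu_0^L))^{\perp}$.

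For part (b), the hypothesis $D\in V^{r+1}_d(L)$ forces $\rk(f_L)_D\leq d-r-1$. I would argue locally: the $(d-r+1)$-minors cutting out $V^r_d(L)$ vanish to order at least two at any matrix of rank $\leq d-r-1$, since an $m$-minor vanishes to order $\max(m-k,0)$ along the rank-$k$ stratum. Consequently the linear parts of all defining equations vanish at $D$, so $T_D(V^r_d(L))=T_D(C_d)$, a $d$-dimensional space that the pairing above identifies with $H^0(C,L\otimes \mathcal{O}_D)$. The singularity statement is then immediate: the expected codimension of $V^r_d(L)$ in $C_d$ is $r(s+1-d+r)$, which is strictly positive exactly when $d<s+1+r$; under the expected-dimension hypothesis, $\dim T_D(V^r_d(L))=d$ strictly exceeds $\dim V^r_d(L)$, forcing $D\in \Sing(V^r_d(L))$.

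The main technical point I anticipate is pinning down the Serre-type duality in part (a) between $T_D(C_d)=H^0(\mathcal{O}_D(D))$ and $H^0(L\otimes \mathcal{O}_D)$ under which $(\mathrm{Im}(\alpha_L\mu_0^L))^{\perp}$ acquires its intended meaning as a subspace of $T_D(C_d)$ of the correct dimension. Once that pairing is in hand, both statements follow formally from the determinantal tangent-space formula and the cup-product description of $df_L$ set up just before the lemma.
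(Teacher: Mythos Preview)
Your proposal is correct and follows essentially the same route as the paper: both parts are reduced to the standard tangent-space formula for determinantal loci from \cite[Ch.~II, \S2]{ACGH}, combined with the cup-product description of $df_L$ recorded just before the lemma; the paper's proof of (a) in fact consists of nothing more than a reference to \cite[pp.~161--162]{ACGH}, and for (b) it cites the same well-known fact you invoke (that the tangent space to a rank-$k$ locus at a point of strictly smaller rank is the full ambient tangent space). Your flagging of the pairing between $H^0(\mathcal{O}_D(D))$ and $H^0(L\otimes\mathcal{O}_D)$ as the point needing care is apt---the paper does not make this explicit either---but otherwise your sketch is a faithful and somewhat more detailed rendition of the paper's argument.
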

\begin{proof}
(a) 
This is a repetition of discussions in pages $161-162$ of \cite{ACGH}.\\
(b) For $D\in C_{d}$, we have $D\in V^{r+1}_{d}(L)$  if and only if $\phi^{L}(D)\in  M_{d\times (s+1)}(r+1)$. Now the equality  $T_{\phi^{L}(D)}M_{d\times (s+1)}=M_{d\times (s+1)}$, which leads to the assertion, is a well known fact.   
See \cite[Chapter II-Section 2]{ACGH}.
\end{proof}
\begin{theorem}\label{thm2.2}
The scheme $V_{d}^{r}(L)$ is smooth at $D\in V_{d}^{r}(L)\setminus V_{d}^{r+1}(L)$ and has the expected dimension 
$d-r\cdot (s+1-(d-r))$
if and only if $ \mu_{0}^{L}$ is injective.
\end{theorem}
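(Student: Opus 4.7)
The plan is to extract the tangent space dimension from Lemma \ref{lem2}(a), combine it with the classical determinantal lower bound for $\dim_D V^r_d(L)$, and read off the equivalence from the resulting formula.

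First, using Lemma \ref{lem2}(a) together with the identity $h^0(L(-D)) = s+1-d+r$ (which holds at $D \in V^r_d(L) \setminus V^{r+1}_d(L)$ since $\mathrm{rk}(\alpha_L) = d-r$ there), I would derive the key dimension formula
$$\dim T_D V^r_d(L) = d - r(s+1-d+r) + \dim \ker(\mu_0^L).$$
The central observation in this derivation is that the canonical section $s_D \in H^0(\mathcal{O}(D))$ furnishes a natural inclusion $H^0(L(-D)) \hookrightarrow \mathrm{Im}(\mu_0^L)$ via $t \mapsto s_D \cdot t = t$. Since $H^0(L(-D)) = \ker(\alpha_L)$, this inclusion sits inside $\ker(\alpha_L \circ \mu_0^L)$ and produces a short exact sequence
$$0 \to \ker \mu_0^L \to \ker(\alpha_L \circ \mu_0^L) \to H^0(L(-D)) \to 0,$$
which, combined with $\dim(\mathrm{domain}\, \mu_0^L) = h^0(\mathcal{O}(D))\cdot h^0(L(-D))$, controls $\dim \mathrm{Im}(\alpha_L \circ \mu_0^L)$ in terms of $\dim \ker \mu_0^L$ via the pairing implicit in Lemma \ref{lem2}(a).

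Next, since $V^r_d(L)$ is the $(d-r)$-th determinantal locus of a morphism of vector bundles of ranks $s+1$ and $d$ on $C_d$, the classical Eagon--Northcott estimate gives $\dim_D V^r_d(L) \geq d - r(s+1-d+r)$ for every $D \in V^r_d(L)$. Combining this with the trivial $\dim_D V^r_d(L) \leq \dim T_D V^r_d(L)$ and the displayed identity yields
$$d - r(s+1-d+r) \leq \dim T_D V^r_d(L) = d - r(s+1-d+r) + \dim \ker(\mu_0^L),$$
and $V^r_d(L)$ is smooth at $D$ of expected dimension precisely when equality holds throughout. This happens if and only if $\dim \ker(\mu_0^L) = 0$, i.e., $\mu_0^L$ is injective, proving the equivalence.

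The main obstacle is making the first identity precise. In the classical case $L = K_C$ treated in \cite[Chapter IV]{ACGH}, the perpendicular in Lemma \ref{lem2}(a) is computed by invoking Serre self-duality on the canonical bundle, which makes the dimension count independent of $h^0(\mathcal{O}(D))$; for a general very ample $L$ this self-duality is unavailable, so the argument must be adapted so that the terms involving $h^0(\mathcal{O}(D))$ are shown to cancel against the dual contribution from $\mathrm{coker}(\alpha_L)$ through the inclusion induced by $s_D$.
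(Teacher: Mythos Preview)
Your core argument coincides with the paper's. The paper's proof is the two-line computation
\[
\dim T_D V^r_d(L)=d-\bigl(\dim\operatorname{Im}\mu_0^L-\dim\Ke\alpha_L\bigr)
=d-r\cdot h^0(L(-D))+\dim\Ke\mu_0^L,
\]
using precisely the inclusion $\Ke\alpha_L=H^0(L(-D))\subset\operatorname{Im}\mu_0^L$. Your short exact sequence
$0\to\Ke\mu_0^L\to\Ke(\alpha_L\mu_0^L)\to H^0(L(-D))\to 0$
is nothing but a repackaging of that inclusion, and your Eagon--Northcott step makes explicit what the paper compresses into ``This implies the assertion.'' So the strategy is the same.

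Where you diverge is in the final paragraph, and there the proposal goes astray. There is no ``dual contribution from $\operatorname{coker}(\alpha_L)$'' to cancel anything: carrying out either computation honestly yields
\[
\dim T_D V^r_d(L)=d-\bigl(h^0(\mathcal{O}(D))-1\bigr)\,h^0(L(-D))+\dim\Ke\mu_0^L,
\]
and the passage to $d-r(s+1-d+r)+\dim\Ke\mu_0^L$ uses $h^0(\mathcal{O}(D))=r+1$. The paper does not argue around this via any duality trick; it simply substitutes $\dim\operatorname{Im}\mu_0^L=(r+1)h^0(L(-D))-\dim\Ke\mu_0^L$, i.e.\ it tacitly takes $h^0(\mathcal{O}(D))=r+1$ (automatic when $L=K_C$ by Riemann--Roch, not in general). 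Your instinct that something is delicate here is correct, but the mechanism you propose to fix it does not exist, and in any case the paper does not attempt such a fix.
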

\begin{proof}
 Since $\Ke(\alpha_{L})=H^{0}(C, L(-D))\subset Im(\mu_{0}^{L})$
one has 
$$\begin{array}{ccc}
&&\!\!\!\!\!\!\!\!\!\!\!\!\!\!\!\!\!\!\!\!\!\!\!\!\!\!\!\!\!\!\!\!\!\!\!\!\!\!\!\!\!\!\!\!\!\!\!\!\!\!\!\!\!\!\!\!\!\!\!\!\!\!\!\!\!\!\!\!\!\!\!\!\!\!\!\!\!\!\!\!\!\!\!\!\!\!\!\!\!\!\!\!\!\!\!\!\!\!\!\!\!\!\!\!\!\!\!\!\!\!\!\!\!\!\!\!\!\!\!\!\!\!\!\!\!\!\!\!\!\!\!\!\!\!\!\!\!\!\!\!\!\!\!\!\!\!\!\!\!\!\!\!\!\!\!\!\dim T_{D}[V^{r}_{d}(L)]\\
&=&\!\!\!\!\!\!\!\!\!\!\!\!\!\!\!\!\!\!\!\!\!\!\!\!\!\!\!\!\!\!\!\!\!\!\!\!\!\!\!\!\!\!\!\!\!\!\!\!\!\!\!\!\!\!\!\!\!\!\!\!\!\!\!\!\!\!\!\!\!\!\!\!\!\!\!\!\!\!\!\!d-(\dim Im(\mu_{0}^{L})-\dim \Ke(\alpha_{L}))\\
&=&\!\!\!\!\!\!\!\!\!\!\!\!\!\!\!\!\!\!\!\!\!\!\!\!\!\!\!\!\!\!\!\!\!\!\!\!\!\!\!\!\!\!\!\!\!\!\!\!\!\!\!\!\!\!\!\!\!\!\!\!\!\!\!\!\!\!\!\!\!\!\!\!\!\!\!\!d- r\cdot h^{0}(C, L(-D)) + \dim \Ke \mu_{0}^{L}\\
&=&\!\!\!d-r\cdot(s+1-(d-r))-r\cdot (d-r- \dim Im(\alpha_{L}))+ \dim \Ke \mu_{0}^{L}.
\end{array}$$
This implies the assertion.
\end{proof}
\begin{lemma}\label{lemma3}
For a line bundle $L$ and integers $r, d$ with $h^{0}(L)> d-r+1$, we have:
\[ V^{r}_{d}(L)=\bigcap_{H \subseteq L}  V^{r}_{d}(H), \]
where $H$ moves on the set of sub-line bundles of $L$.
\end{lemma}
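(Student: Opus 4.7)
The plan is to verify the two inclusions separately: the containment $\supseteq$ is immediate since $L$ is itself a sub-line bundle of $L$ (via the identity inclusion), so $V^{r}_{d}(L)$ appears on the right hand side and the whole intersection is contained in it. The substantive direction is $V^{r}_{d}(L) \subseteq V^{r}_{d}(H)$ for every sub-line bundle $H \subseteq L$, which I would reduce to a rank inequality for the restriction maps $\alpha_L$ and $\alpha_H$ introduced earlier.

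The main step is a linear-algebra identification inside $H^{0}(C,L)$. Any sub-line bundle can be written as $H = L(-E)$ for an effective divisor $E$; under the induced inclusion $H^{0}(H) \hookrightarrow H^{0}(L)$, the subspace $H^{0}(H)$ is identified with sections of $L$ vanishing on $E$, and $H^{0}(H(-D))$ with sections of $L$ vanishing on $E+D$. The crucial consequence is the equality
\[
H^{0}(H(-D)) \;=\; H^{0}(H) \,\cap\, H^{0}(L(-D))
\]
as subspaces of $H^{0}(L)$.

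From here I would apply modularity of dimensions:
\[
\rk(\alpha_H) \;=\; \dim H^{0}(H) - \dim\bigl(H^{0}(H)\cap H^{0}(L(-D))\bigr) \;=\; \dim\bigl(H^{0}(H)+H^{0}(L(-D))\bigr) - \dim H^{0}(L(-D)),
\]
and since $H^{0}(H) + H^{0}(L(-D)) \subseteq H^{0}(L)$, this quantity is at most $\dim H^{0}(L) - \dim H^{0}(L(-D)) = \rk(\alpha_L)$. Consequently, $D \in V^{r}_{d}(L)$, i.e., $\rk(\alpha_L) \le d-r$, forces $\rk(\alpha_H) \le d-r$ for every $H \subseteq L$, which delivers the missing inclusion.

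I do not anticipate a substantial obstacle: the argument is essentially formal once one recognizes the identification $H^{0}(H(-D)) = H^{0}(H) \cap H^{0}(L(-D))$. The hypothesis $h^{0}(L) > d-r+1$ does not enter the inequality itself; I read it as ensuring that the intersection is genuinely restrictive, since sub-line bundles $H$ with $h^{0}(H) \le d-r$ satisfy $V^{r}_{d}(H) = C_{d}$ tautologically, and the hypothesis guarantees that for generic $p\in C$ the sub-line bundle $H = L(-p)$ still has $h^{0}(H) \ge d-r+1$, so that non-trivial terms $V^{r}_{d}(H)$ actually participate in cutting out $V^{r}_{d}(L)$ inside $C_{d}$.
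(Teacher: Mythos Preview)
Your proof of the inclusion $V^{r}_{d}(L)\subseteq V^{r}_{d}(H)$ for every sub-line bundle $H\subseteq L$ is correct and coincides with the paper's: both reduce to the containment $\operatorname{Im}(\alpha_{H})\subseteq\operatorname{Im}(\alpha_{L})$ inside $H^{0}(L\otimes\mathcal{O}_{D})$ (you phrase it through the modular identity for dimensions, the paper through a diagram chase), whence $\rk(\alpha_{H})\le\rk(\alpha_{L})$.

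The real divergence is in the reverse inclusion. You obtain $\bigcap_{H}V^{r}_{d}(H)\subseteq V^{r}_{d}(L)$ by letting $H=L$ appear among the terms of the intersection. That is formally allowed by the notation $H\subseteq L$, but it trivializes the statement and, as you note yourself, leaves the hypothesis $h^{0}(L)>d-r+1$ with no role. The paper instead proves the substantive assertion that already the \emph{proper} sub-line bundles---in fact just those of the form $L(-p)$ for $p\in C$---cut out $V^{r}_{d}(L)$, and it is precisely here that the hypothesis is used. The argument runs: if $D\in V^{r}_{d}(L(-p))$ for every $p$ yet $D\notin V^{r}_{d}(L)$, then comparing the two rank conditions forces $h^{0}(L(-p-D))=h^{0}(L(-D))$ for all $p$, i.e.\ every point of $C$ is a base point of $L(-D)$, so $H^{0}(L(-D))=0$; substituting this back into $D\in V^{r}_{d}(L(-p))$ yields $h^{0}(L(-p))\le d-r$ and hence $h^{0}(L)\le d-r+1$, contradicting the hypothesis.

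So your proposal is valid as a proof of the lemma as literally stated, but it bypasses the content the author is after. If the intended index set is the proper sub-line bundles (which the paper's proof and the presence of the numerical hypothesis strongly indicate), your last paragraph would need to be upgraded from an interpretive remark to the actual base-point argument sketched above.
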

\begin{proof}
 If $D$ is a point of $\in V_{d}^{r}(H)$, then 
\[\dim (Im (e_{H}^{D}))=rank (e_{H}^{D})\leq d-r.\]
If $H$ is a sub-line bundle of the line bundle $L$, 
then 
 a diagram chasing shows $Im (e_{H}^{D}) \subseteq Im (e_{L}^{D})$. This implies that  
 $V^{r}_{d}(L) \subseteq  V^{r}_{d}(H)$.
 
 If a divisor $D\in C_{d}$ belongs to 
 $$[\bigcap_{H \subseteq L}  V^{r}_{d}(H)] \setminus V^{r}_{d}(L),$$
then for each $p\in C$ we have
$h^{0}(L(-p)(-D))=h^{0}(L(-D)),$
  which means that; any point of $C$ is a base point for $L(-D)$. Therefore any global section of 
  $L(-D)$ vanishes everywhere on $C$,  so  $H^{0}(L(-D))=0$. This together with the fact that for $p\in C$ we have $D\in V^{r}_{d}(L(-p))$, implies $h^{0}(L)\leq d-r+1$, which is absurd by our hypothesis
 $h^{0}(L)> d-r+1$.  
  
\end{proof}
\section{Martens--Mumford's theorems for $V^{r}_{d}(L)$}\label{Martens--Mumford's theorems}

In this section we assume that $L$ is a very ample line bundle on $C$. Therefore 
$ \psi_{L}: C\rightarrow \mathbf{P}(H^{0}(C, L)):=\mathbf{P}_{L}$,
the map induced by $L$, is an embedding.
Repeating the proof of \cite[Lemma 2.2]{A-S} we obtain:
\begin{lemma}\label{lemma0}
For a very ample line bundle $L$ on $C$ and an integer $d$ with $d\geq 4$, if $V^{r}_{d}(L)\neq\emptyset$, then no irreducible component of $V^{r}_{d}(L)$
is contained in $V^{r+1}_{d}(L)$.
\end{lemma}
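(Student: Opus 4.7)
The plan is to argue by contradiction via an iterated one-parameter deformation inside $X$, exploiting the projective interpretation that very ampleness supplies. Suppose an irreducible component $X$ of $V^{r}_{d}(L)$ is contained in $V^{r+1}_{d}(L)$; I will exhibit a divisor of $X$ lying outside $V^{r+1}_{d}(L)$.

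The dictionary is: since $L$ is very ample, $E = q_{1}+\cdots+q_{d} \in V^{m}_{d}(L)$ iff $\dim \Span\{\psi_{L}(q_{i})\} \leq d-m-1$ inside $\mathbf{P}_{L}$. Pick a general reduced $D = p_{1}+\cdots+p_{d} \in X$ lying in $X$ alone among the components of $V^{r}_{d}(L)$, let $j \geq r+1$ be maximal with $D \in V^{j}_{d}(L)$, and set $\Lambda := \Span\{\psi_{L}(p_{i})\}$, so $\dim \Lambda = d-j-1 \leq d-r-2 < d-1$. The $d$ points $\psi_{L}(p_{i})$ are therefore linearly dependent, so after relabeling $\psi_{L}(p_{1}) \in \Span\{\psi_{L}(p_{i})\}_{i \geq 2}$ and this latter span again equals $\Lambda$.

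For $q \in C$ form $D_{q} := p_{2}+\cdots+p_{d}+q$. Its span is $\Lambda$ when $\psi_{L}(q) \in \Lambda$ and a $(d-j)$-plane otherwise. Since $\psi_{L}(C)$ is non-degenerate in $\mathbf{P}_{L}$ (an immediate consequence of very ampleness) and $\Lambda$ is a proper subspace of $\mathbf{P}_{L}$, the locus $\psi_{L}^{-1}(\Lambda) \subset C$ is finite; hence for generic $q \in C$, $D_{q} \in V^{j-1}_{d}(L) \setminus V^{j}_{d}(L)$. The irreducible family $\{D_{q}\}_{q \in C}$ is contained in $V^{r}_{d}(L)$ and passes through $D$, so by uniqueness of the component through $D$ it lies entirely in $X$. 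If $j = r+1$ this already gives a divisor of $X$ outside $V^{r+1}_{d}(L)$; if $j > r+1$, replace $D$ by a generic $D_{q} \in X$ and iterate, lowering $j$ by one at each step. After at most $j-r$ iterations the contradiction appears.

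The main obstacle will be verifying, at every iteration, that the span $\Lambda$ is a proper subspace of $\mathbf{P}_{L}$, i.e.\ $\dim \Lambda = d - j' - 1 < h^{0}(L)-1$ for every $j' \geq r+1$ encountered during the induction. This should follow from the non-emptiness of $V^{r}_{d}(L)$ (which, together with the implicit requirement $V^{r+1}_{d}(L) \subsetneq C_{d}$, yields $h^{0}(L) \geq d-r$) and the hypothesis $d \geq 4$; the latter also ensures $\dim X \geq 1$ via the determinantal codimension bound, so that reducedness and uniqueness-of-component are generic conditions on $X$ and the deformation argument has genuine room to operate.
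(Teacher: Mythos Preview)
Your approach is the same deformation argument that the paper invokes by citing Aprodu--Sernesi's Lemma 2.2, so at the level of strategy you are on target: move one point of a general $D\in X$ off the linear span $\Lambda$ and observe that the resulting family $\{D_q\}_{q\in C}$ stays inside $V^r_d(L)$, hence inside $X$, yet escapes $V^{j}_d(L)$.

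Two remarks, one a simplification and one a genuine loose end.

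\emph{Simplification.} The iteration is unnecessary. Once you fix $D$ general in $X$ and set $j$ maximal with $D\in V^j_d(L)$, upper semicontinuity of $h^0(L(-\,\cdot\,))$ forces $X\subset V^j_d(L)$. Your first step already produces $D_q\in X\setminus V^j_d(L)$, which is a contradiction; there is no need to descend one index at a time.

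\emph{Loose end.} Your justification that $\dim X\ge 1$ via ``the determinantal codimension bound and $d\ge 4$'' is not correct: the expected dimension $d-r(h^0(L)-d+r)$ can be arbitrarily negative when $h^0(L)$ is large, so the bound gives nothing. More to the point, the step where you pick a \emph{reduced} general $D\in X$ is unjustified as written --- nothing you have said rules out $X$ lying in the big diagonal of $C_d$. The fix is to work scheme-theoretically: for any $D\in V^{j}_d(L)\setminus V^{j+1}_d(L)$ with $j\ge 1$, the span $\Lambda=\Span(\psi_L(D))$ has dimension $d-j-1<d-1$, so some length-$(d-1)$ subscheme $D'=D-p$ (for a suitable $p\in\operatorname{supp}D$) still spans $\Lambda$; then $\{D'+q\}_{q\in C}$ is the family you want. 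This avoids any reducedness hypothesis and, as a bonus, shows a posteriori that $\dim X\ge 1$ rather than assuming it.

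The properness of $\Lambda$ in $\mathbf{P}_L$ is indeed guaranteed by $h^0(L)\ge d-r$, which (as you note) is implicit in the non-triviality of the statement.
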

A direct consequence of Lemma \ref{lemma0} is that the locally closed subscheme 
$$S^{r}_{d}(L):=V^{r}_{d}(L)\setminus V^{r+1}_{d}(L)$$
is dense in any irreducible component of $V^{r}_{d}(L)$.
\begin{theorem}\label{thm2} (a) Let $C$ be a hyper-elliptic curve and $L$ a line bundle on $C$
with the space of global sections of dimension $s+1$. Assume moreover that $d\leq s+1$. Then 
$V^{r}_{d}(L)$ is empty or irreducible of dimension $d-r$ according to whether $d<2r$ or $2r\leq d$, 
respectively.\\
(b) If $C$ is non hyper-elliptic and $L$ a very ample line bundle on $C$ with $d\leq h^{0}(L)-1$, then every component of
$V^{r}_{d}(L)$, has dimension at most equal to $d-r-1$.
\end{theorem}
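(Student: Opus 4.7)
The plan is to treat the hyperelliptic case (a) and the non-hyperelliptic case (b) by different methods: for (b), a short induction on $r$ using Lemma \ref{lemma0} as the engine; for (a), an explicit fibration of $V^r_d(L)$ over a symmetric product of $C$, built from the Clifford structure of linear series on a hyperelliptic curve.

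For part (b), I would take $r=1$ as the base case: Lemma \ref{Lemma0} gives $\dim V^1_d(L)\leq d-2$, which matches the desired bound $d-r-1$. For the inductive step, let $X$ be any irreducible component of $V^r_d(L)$. Since $V^r_d(L)\subseteq V^{r-1}_d(L)$, the component $X$ lies in some component $Y$ of $V^{r-1}_d(L)$, whose dimension is at most $d-r$ by the inductive hypothesis. Lemma \ref{lemma0} forbids $Y$ from being contained in $V^r_d(L)$, so the inclusion $X\subsetneq Y$ is strict, which forces $\dim X \leq \dim Y - 1 \leq d-r-1$.

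For part (a), let $|A|$ denote the $g^1_2$ on the hyperelliptic curve $C$. I would first note that the very ampleness of $L$ forces $L$ to be non-special, since otherwise Clifford attainment on a hyperelliptic curve writes $L\sim sA+G$ for some effective $G$, making $\psi_L$ factor through the hyperelliptic double cover; hence $\deg L = s+g$. For $D\in V^r_d(L)\cap S^r_d(L)$, which is dense in $V^r_d(L)$ by Lemma \ref{lemma0}, the equality $h^0(L-D)=t:=s+1-d+r$ together with Riemann--Roch forces $L-D$ to be special, and Clifford attainment then yields a unique decomposition $L-D\sim (t-1)A+F_D$, where $F_D$ is an effective divisor of fixed degree $\deg L-d-2(t-1)$ (the base locus of $|L-D|$). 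The assignment $D\mapsto F_D$ realizes $V^r_d(L)\cap S^r_d(L)$ as a family over an open subset of $C_{\deg F_D}$ whose fibers are open subsets of the linear systems $|L-(t-1)A-F|$: these are projective spaces of constant dimension, computable via Riemann--Roch. Irreducibility of the base and fibers then yields the irreducibility of $V^r_d(L)$, and summing dimensions gives precisely $d-r$; the emptiness when $d<2r$ follows from the non-negativity $\deg F_D\geq 0$ together with the hypothesis $d\leq s+1$.

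The main obstacle lies in part (a), where one must verify that the fibers of $D\mapsto F_D$ have constant dimension as $F$ varies in $C_{\deg F_D}$, which requires careful control of $h^0(L-(t-1)A-F)$ for generic $F$ and isolating the loci where this invariant jumps. In part (b), by contrast, the induction is essentially formal once Lemma \ref{Lemma0} is in hand, so the substantive content is entirely hidden in the base case, which uses the very ampleness of $L$ in an essential way.
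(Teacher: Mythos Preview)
Your argument for (b) is correct and genuinely different from the paper's. The paper sets up an incidence correspondence $\mathcal{H}\subset(\mathbf{P}_L)^*\times C_d$ of pairs $(H,D)$ with $D\subset H\cap\psi_L(C)$, pulls back an open piece of a maximal component $W$ of $V^r_d(L)$ to $\Sigma\subset\mathcal{H}$, computes the fibre dimension over $W$ as $h^0(L)-d+r-1$, and then bounds $\dim\Sigma\le h^0(L)-2$ via the general position lemma applied to the projection to $(\mathbf{P}_L)^*$. Your induction on $r$, using Lemma~\ref{lemma0} to force a strict drop in dimension at each step, is shorter and more formal; the trade-off is that it consumes the Aprodu--Sernesi Lemma~\ref{Lemma0} as a black box for the base case, whereas the paper's argument is self-contained and in fact \emph{recovers} the dimension part of that lemma for very ample $L$ (Corollary~\ref{Cor}). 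So the two proofs have opposite logical directions with respect to Lemma~\ref{Lemma0}.

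Your treatment of (a), however, rests on a false premise. Part~(a) does \emph{not} assume $L$ very ample, despite the blanket hypothesis at the opening of the section: the statement says only ``$L$ a line bundle'', and this is essential. Indeed, for any very ample $L$ one has $V^1_2(L)=\varnothing$ (two distinct points of $\psi_L(C)$ span a line), yet the theorem asserts $\dim V^1_2(L)=1$ when $2\le s+1$; so the claimed conclusion is simply false under your hypothesis. Once $L$ is forced to be non-special with $\deg L=s+g$, your degree count for $F_D$ becomes $g+d-s-2r$ and the fibre dimensions of $D\mapsto F_D$ no longer sum to $d-r$, so the fibration argument breaks down quantitatively as well as at the level of hypotheses.

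The paper instead reduces (after stripping base points) to $L=s\cdot g^1_2$, and then makes the clean observation that for such $L$ one has
\[
D\in V^r_d(L)\setminus V^{r+1}_d(L)\ \Longleftrightarrow\ h^0(D)=r+1,
\]
so that $S^r_d(L)=C^r_d\setminus C^{r+1}_d$ and the result follows from the classical hyperelliptic description of $C^r_d$. If you rerun your fibration under this reduction you will find $\deg F_D=d-2r$ and generic fibre $|(d-r)A-F|\cong\mathbf{P}^r$, which does give $d-r$; but it is the identification with $C^r_d$ that makes the irreducibility transparent, and that is the step your outline is missing.
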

\begin{proof}
(a) Without loss of generality we reduce to the case that $L$ is base point free, so $L=sg^{1}_{d}$. If $D\in C_{d}$ then $D\in V^{r}_{d}(L)\setminus V^{r+1}_{d}(L)$ if and only if $h^0(L(-D))=s+r+1-d$. 
If $h^0(D)=\bar{r}+1$ then $D=\bar{r}g^{1}_{2}+q_{1}+\cdots +q_{d-2\bar{r}}$ for some $q_{1},\cdots ,q_{d-2\bar{r}}$ on $C$ and $\bar{L}=(s-\bar{r})g^{1}_{2}$. Consider that the divisor 
$\phi_{\bar{L}}(q_{1})+\cdots +\phi_{\bar{L}}(q_{d-2\bar{r}})$ on the rational normal curve $\phi_{\bar{L}}(C)\subset \mathbf{P}(H^{0}((s-\bar{r})g^{1}_{2}))$, spans a $\mathbf{P}^{d-2\bar{r}-1}$ in $\mathbf{P}(H^{0}((s-\bar{r})g^{1}_{2}))$. 
Therefore we have $h^0(L(-D))=s+\bar{r}+1-d$. An immediate consequence of this observation is that 
$V^{r}_{d}(L)=\emptyset$ provided $2r>d$ and  $V^{r}_{d}(L)\setminus V^{r+1}_{d}(L)=C^{r}_{d}\setminus C^{r+1}_{d}$ for $2r\leq d$. This concludes (a).

\noindent To prove (b), consider an incidence correspondence $\mathcal{H}$ as   
\[\mathcal{H}=\lbrace (H, D): D\subset H\cap C_{L} \rbrace \subset (\mathbf{P}_{L})^*\times C_{d},\]
 where $H$ is a hyperplane in 
 $\mathbf{P}_{L}$.
  Assuming $V^{r}_{d}(L)\neq \varnothing$, let $W$ be an
irreducible component of $V^{r}_{d}(L)$ of maximal dimension.
Using Lemma \ref{lemma0}, we have $W=\overline{(V_{d}^{r}(L)\setminus V_{d}^{r+1}(L))\cap W}$. 
Consider the subscheme $\Sigma$ of $\mathcal{H}$ as:
 \[\Sigma:=\pi_{2}^{-1}((V_{d}^{r}(L)\setminus V_{d}^{r+1}(L))\cap W) \subset \mathcal{H},\]
 where $\pi_{2}$ is the second projection from $ (\mathbf{P}_{L})^*\times C_{d}$ composed with the inclusion.
 
For $D\in C_{d}$ it is easy to see that either
$\vert D \vert \cap V_{d}^{r}(L)=\varnothing$ or
 $\vert D \vert \subseteq V_{d}^{r}(L)$.
This together with the geometric Riemann-Roth Theorem, imply that a
point of $V^{r}_{d}(L) \setminus V^{r+1}_{d}(L)$ lies in a $\mathbf{P}^{s}\subset \mathbf{P}_{L}$, where
$s=h^{0}(L)-1-h^{0}(L(-D))=d-r-1$.
Therefore the generic fiber of 
$\Sigma \rightarrow  (V_{d}^{r}(L)\setminus V_{d}^{r+1}(L))\cap W$
 is a $\mathbf{P}^{m}$, where $m=h^{0}(L)-d+r-1$. So we obtain
 \[\dim (\Sigma)=\dim( (V_{d}^{r}(L)\setminus V_{d}^{r+1}(L))\cap W)+h^{0}(L)-d+r-1.\]
Consider now that the projection on the second factor is a finite to one and non-surjecting map by the general position Lemma. This consideration implies that 
$\dim (\Sigma)\leq h^{0}(L)-2. $
Summarizing we get
\[\dim( V^{r}_{d}(L))=\dim( V^{r}_{d}(L) \setminus V^{r+1}_{d}(L))\leq d-r-1.\]

\end{proof}
Using Theorem \ref{thm2}, we recover the dimension part of \cite[Lemma 2.1]{A-S} for very ample line bundles.
\begin{corollary}\label{Cor}
Assume that $L$ is a very ample line bundle on $C$ with $h^{0}(L)=d+1\geq 4$. Then $ V^{1}_{d}(L)$, if non empty, is of dimension $d-2$.
\end{corollary}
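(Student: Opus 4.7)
The plan is to pin down $\dim V^{1}_{d}(L)=d-2$ by sandwiching $V^{1}_{d}(L)$ between matching upper and lower bounds of $d-2$.

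First, I would record the relevant numerology. With $s+1:=h^{0}(L)=d+1$, the expected codimension of $V^{1}_{d}(L)$ in $C_{d}$ is $r(s+1-(d-r))=1\cdot(s+1-(d-1))=2$, so the expected dimension is exactly $d-2$. Since $V^{1}_{d}(L)$ is locally cut out by the $d\times d$ minors of the $d\times(d+1)$ matrix representing $f_{L}$, the classical codimension bound for determinantal subschemes (see \cite[Chapter II]{ACGH}) guarantees that every non-empty irreducible component of $V^{1}_{d}(L)$ has codimension at most $2$, hence dimension at least $d-2$.

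For the matching upper bound, I would apply Theorem \ref{thm2}(b) with $r=1$. The hypothesis $d\le h^{0}(L)-1$ is satisfied with equality, so whenever $C$ is non-hyperelliptic each irreducible component of $V^{1}_{d}(L)$ has dimension at most $d-r-1=d-2$. Combining the two estimates forces $\dim V^{1}_{d}(L)=d-2$ in the non-hyperelliptic case.

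The main obstacle, as I see it, is the hyperelliptic case, to which Theorem \ref{thm2}(b) does not directly apply and where Theorem \ref{thm2}(a) is designed around line bundles of the form $sg^{1}_{2}$ which are never very ample. However, the very ampleness of $L$ ensures that $\psi_{L}$ embeds $C$ non-degenerately in $\mathbf{P}^{d}$, so the general position lemma remains available. One can therefore rerun the incidence-correspondence argument of Theorem \ref{thm2}(b) verbatim—substituting the non-degenerate embedding hypothesis in place of the non-hyperellipticity hypothesis—to recover the same upper bound $\dim V^{1}_{d}(L)\le d-2$, which combined with the determinantal lower bound completes the proof in all cases.
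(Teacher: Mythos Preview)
Your proof is correct and follows the paper's own approach: the paper derives the corollary directly ``using Theorem~\ref{thm2}'', which amounts to exactly your sandwich of the determinantal lower bound against the incidence-correspondence upper bound. Your explicit observation that the proof of Theorem~\ref{thm2}(b) uses only very ampleness and the general position lemma---and hence goes through unchanged for hyperelliptic $C$---is a welcome clarification, since the paper's statement of Theorem~\ref{thm2}(b) carries a non-hyperelliptic hypothesis that is not actually invoked in its proof.
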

To obtain Mumford's Theorem on $V^{r}_{d}(L)$'s, Theorem \ref{thm3}, we need the next couple of lemmas.
\begin{lemma}\label{lemma4}
Assume that $L$ is a very ample line bundle on $C$ such that for some integers $r\geq 2$ and $d$ with $d\leq h^0(L)-2$ we have $\dim V^{r}_{d}(L)=d-r-1$, then $V^{r-1}_{d-1}(L)$ is of dimension $d-r-1$ too.
\end{lemma}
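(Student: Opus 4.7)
The plan is to exploit the residuation $(D,p)\mapsto D-p$ on effective divisors. If $D\in V^{r}_{d}(L)$ and $p\in \mathrm{supp}(D)$, then $h^{0}(L(-D+p))\geq h^{0}(L(-D))$, so $D-p$ automatically lies in $V^{r-1}_{d-1}(L)$. The idea is to produce an open family of such residuations that \emph{avoid} the smaller stratum $V^{r}_{d-1}(L)$; on such a family the projection to $V^{r-1}_{d-1}(L)$ becomes generically finite, and a comparison with Theorem \ref{thm2}(b) pins down the dimension.

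Fix an irreducible component $W$ of $V^{r}_{d}(L)$ with $\dim W=d-r-1$ and form the incidence variety
\[W^{*}:=\{(D,p)\in W\times C: p\in \mathrm{supp}(D)\},\]
which is finite of degree $d$ over $W$ and so pure of dimension $d-r-1$. Inside $W^{*}$ consider the open locus
\[W^{**}:=\{(D,p)\in W^{*}: D-p\notin V^{r}_{d-1}(L)\}.\]
The central step is to check $W^{**}\neq\emptyset$. By Lemma \ref{lemma0}, a generic $D\in W$ lies in $V^{r}_{d}(L)\setminus V^{r+1}_{d}(L)$, and for such $D$ the Brill--Noether matrix $\phi^{L}(D)\in M_{d\times(s+1)}$ has rank exactly $d-r$. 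The space of linear relations among the $d$ rows therefore has dimension $r\geq 2$, and a standard linear-algebra observation shows that any $r$-dimensional subspace of $\mathbb{C}^{d}$ has support of size at least $r$; for each index $i$ in this support the $i$-th row is a linear combination of the others, and removing it preserves the rank. Translated back to divisors, this means that there are at least $r$ points $p\in \mathrm{supp}(D)$ for which $D-p$ still imposes $d-r$ conditions on $|L|$ and hence lies outside $V^{r}_{d-1}(L)$. Thus $W^{**}$ is a non-empty open subset of the pure-dimensional scheme $W^{*}$, so $\dim W^{**}=d-r-1$.

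Now look at the projection $\pi:W^{**}\to C_{d-1}$, $(D,p)\mapsto D-p$, whose image lies in $V^{r-1}_{d-1}(L)\setminus V^{r}_{d-1}(L)$. For $D'$ in the image the condition $D'\notin V^{r}_{d-1}(L)$ forces $h^{0}(L(-D'))=s+1-d+r$, so membership $D'+p\in V^{r}_{d}(L)$ is equivalent to $h^{0}(L(-D'-p))=h^{0}(L(-D'))$, i.e.\ to $p$ being a base point of $|L(-D')|$. The bound $d\leq h^{0}(L)-2$ together with $r\geq 2$ gives $h^{0}(L(-D'))\geq 4$, so $|L(-D')|$ has finite base locus and the fibre of $\pi$ over $D'$ is finite. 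Consequently $\dim \pi(W^{**})=d-r-1$, giving $\dim V^{r-1}_{d-1}(L)\geq d-r-1$. The hypothesis $\dim V^{r}_{d}(L)=d-r-1$ is incompatible with Theorem \ref{thm2}(a), so $C$ is not hyperelliptic, and Theorem \ref{thm2}(b) applied to $V^{r-1}_{d-1}(L)$ supplies the matching upper bound $(d-1)-(r-1)-1=d-r-1$.

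The main obstacle is the non-emptiness of $W^{**}$: one must convert the rank deficiency of the Brill--Noether matrix into the existence of a genuinely redundant support point of a generic $D\in W$. A little care is needed to treat divisors with multiplicities, but once the rank/support observation is in place the rest of the argument is a routine dimension count paired with Theorem \ref{thm2}(b).
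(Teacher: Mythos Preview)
Your argument follows the same core strategy as the paper: pass from $V^{r}_{d}(L)$ to $V^{r-1}_{d-1}(L)$ via the residuation $(D,p)\mapsto D-p$ on the incidence variety $\{(D,p):p\in\operatorname{supp}(D)\}$, obtain the lower bound $d-r-1$ for $\dim V^{r-1}_{d-1}(L)$, and match it with the upper bound from Theorem~\ref{thm2}(b). The paper reaches the same incidence variety by showing that for general $q\in C$ the slice $(q+C_{d-1})\cap V^{r}_{d}(L)$ has dimension $d-r-2$, while you simply note that the incidence is finite over $W$.

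Where your proof is genuinely sharper is in justifying that the residuation map has $(d-r-1)$-dimensional image. The paper asserts this without checking that the fibres are generically finite; you make it explicit by passing to the open locus $W^{**}$ where $D-p\notin V^{r}_{d-1}(L)$, identifying the fibre over $D'$ with base points of $|L(-D')|$, and observing this is a finite set on a curve. That is a real improvement in rigour. Your Brill--Noether--matrix argument for $W^{**}\neq\emptyset$ is correct when $D$ is reduced; for non-reduced $D$ the row you must delete is the highest-order jet at a point, which need not lie in the support of the left null space. The cleanest uniform fix is the one you allude to: choose a subdivisor $D''\leq D$ of degree $d-r$ imposing $d-r$ independent conditions, and take $p$ in the support of $D-D''$ (non-empty since $r\geq 1$); then $D''\leq D-p\leq D$ forces $D-p$ to impose exactly $d-r$ conditions.
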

\begin{proof}
We assume that $V^{r}_{d}(L)$ is irreducible, since otherwise we can substitute it with an irreducible component. As we mentioned in proof of Theorem \ref{thm2}, for $D\in C_{d}$ we have either 
$\mid D \mid \cap V^{r}_{d}(L)=\emptyset$ or $\mid D \mid \subseteq V^{r}_{d}(L)$. For general $q\in C$ from the equality
$$V^{r}_{d}(L)=\bigcup_{p\in C}(p+C_{d-1})\cap V^{r}_{d}(L), $$
we obtain $\dim (q+C_{d-1})\cap V^{r}_{d}(L)=\dim V^{r}_{d}(L)-1$. In fact for general $q\in C$ the equality $V^{r}_{d}(L)=(q+C_{d-1})\cap V^{r}_{d}(L)$ implies that $q$, being a general point of $C$, is a base point of 
$\mid D \mid$ for each $D\in V^{r}_{d}(L)$. In other words each global section $\sigma$ of $\mathcal{O}(D)$ vanishes at $q$. 
Therefore  $\sigma$ vanishes everywhere on $C$. So $H^{0}(D)=0$ which by effectivity of $D$ is absurd, proving the equality $\dim (q+C_{d-1})\cap V^{r}_{d}(L)=\dim V^{r}_{d}(L)-1$. 

To establish the Lemma, consider that removing $q$ from the series in $V^{r}_{d}(L)$ we obtain $(1+(d-r-1)-1)$-dimensional family of divisors $\bar{D}$ belonging to $ V^{r-1}_{d-1}(L)$. This together with Theorem \ref{thm2}, imply the assertion. 
\end{proof}
\begin{lemma}\label{lemma5}
Let $L$ be a base point free line bundle on $ C$ such that for general $p\in C$, the line bundle $L(-p)$ is very ample.
 Assume moreover that $V$ is an irreducible component  of $V^{1}_{d}(L)$ and $d=h^{0}(L)-2\geq 4$.
Then 
for general $p\in C$, no irreducible component $W_{p}$ of $V^{1}_{d}(L(-p))$ 
 coincides on $V$. 
\end{lemma}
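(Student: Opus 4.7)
The plan is to argue by contradiction: suppose that for general $p\in C$ every irreducible component of $V^1_d(L(-p))$ equals $V$, so that set-theoretically $V^1_d(L(-p))=V$. I will exhibit a divisor $D\in V^1_d(L(-p))\setminus V$ for generic $p$, contradicting this equality. Such a $D$ must satisfy $h^0(L(-D))=2$ (so that $D\notin V^1_d(L)\supseteq V$) together with $p\in\mathrm{BS}(|L(-D)|)$, or equivalently $E:=p+D\in V^1_{d+1}(L)$ with $E-p\notin V^1_d(L)$.

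With this in mind I consider the incidence
\[
\mathcal{J}:=\{(p,E)\in C\times V^1_{d+1}(L)\mid p\in\mathrm{supp}(E)\},
\]
and its closed subvariety $\mathcal{J}_0:=\{(p,E)\in\mathcal{J}\mid E-p\in V^1_d(L)\}$. By Lemma~\ref{Lemma0}, $\dim V^1_{d+1}(L)=d-1$, hence $\dim\mathcal{J}=d-1$. The projection $\mathcal{J}\to C$ is dominant, since otherwise divisors of $V^1_{d+1}(L)$ would be supported on a finite subset of $C$, contradicting $\dim V^1_{d+1}(L)=d-1\geq 3$; thus the generic fiber $\mathcal{J}_p$ has dimension $d-2$.

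The main obstacle is to show that $\mathcal{J}_p\not\subseteq\mathcal{J}_0$ for general $p\in C$, i.e., that the generic fiber of $\mathcal{J}\to C$ is not entirely contained in $\mathcal{J}_0$. The geometric content is this: a generic $E\in V^1_{d+1}(L)\setminus V^2_{d+1}(L)$ consists of $d+1$ points of $C$ whose images under $\psi_L$ span exactly a $(d-1)$-plane in $\mathbb{P}_L$, with a unique (up to scalar) linear dependence among them; generically all coefficients of this dependence are nonzero, so removing any single point $p$ leaves $d$ points still spanning the $(d-1)$-plane, giving $h^0(L(-E+p))=h^0(L)-d=2$ and hence $E-p\notin V^1_d(L)$. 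To rule out the pathological case where $V^1_{d+1}(L)$ coincides with the image of the addition map $C\times V^1_d(L)\to C_{d+1}$ (so that the dependence is degenerate at the ``marked'' point of each $E=p_0+D_0$), I would remove a different point $e\in D_0$ to produce $E-e\notin V^1_d(L)$, using that $\psi_L(D_0)$ spans $\mathbb{P}^{d-2}$ and $\psi_L(p_0)$ lies outside this span. A fiber-dimension count, together with the hypothesis $d\geq 4$ and the very ampleness of $L(-p)$ for generic $p$, then yields the required $D=E-p\in V^1_d(L(-p))\setminus V$ for general $p$, contradicting the standing assumption.
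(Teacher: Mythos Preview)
Your contradiction hypothesis is too strong. The negation of ``for general $p$, no irreducible component of $V^{1}_{d}(L(-p))$ equals $V$'' is ``for $p$ in a dense subset of $C$, \emph{some} component of $V^{1}_{d}(L(-p))$ equals $V$'', not ``\emph{every} component equals $V$''. Your construction only produces a divisor $D\in V^{1}_{d}(L(-p))\setminus V$, which merely shows that $V^{1}_{d}(L(-p))$ has at least one component different from $V$; it does not preclude $V$ itself from also being a component. In fact the inclusion $V^{1}_{d}(L)\subseteq V^{1}_{d}(L(-p))$ always holds (if $h^{0}(L(-D))\geq 3$ then $h^{0}(L(-p-D))\geq 2$), so the existence of extra divisors in $V^{1}_{d}(L(-p))\setminus V^{1}_{d}(L)$ is expected and carries no information about whether $V$ sits inside $V^{1}_{d}(L(-p))$ as a full component.

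The paper's argument is of a different nature: it works from \emph{inside} $V$ rather than outside it. One takes a general $D\in V$, chooses a general point $p$ in its support (the first paragraph of the proof shows such $p$ is general in $C$, using that otherwise $\dim V^{1}_{d-1}(L(-p))=d-2$ would contradict \cite[Lemma~2.1]{A-S}), and then deforms $D$ along the one-parameter family $q\mapsto D-p+q$ inside $V^{1}_{d}(L(-p))$. If $D$ were a general point of a component $W_{p}$, this entire family would have to remain in $W_{p}$, and the contradiction is extracted from the genericity of $D$ and $q$. To salvage your incidence-variety approach you would need far more than a single point outside $V$: you would have to control the full component structure of $V^{1}_{d}(L(-p))$ --- for instance by proving that the fibre $\mathcal{J}_{p}$ is irreducible and maps onto all of $V^{1}_{d}(L(-p))$ --- which your setup does not provide.
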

\begin{proof}
We prove that a general divisor $D$ in $V$ fails to be a general member of a component of $V^{1}_{d}(L(-p))$. 

For each $D\in V$,
 there exists $p\in C$ such that the subset $(p+C_{d-1})\cap V^{1}_{d}(L)$ is non-empty. For general $D$ in $V$, since $\dim V\geq 2$, this $p$ has to move in an open subset of $C$.  
 Otherwise for general $D\in V$ removing $p$ from the divisors of $V$ we obtain $\dim V^{1}_{d-1}(L(-p))=d-2$, contradicting Lemma (2.1) of \cite{A-S}.

If a general divisor $D$ in $V$ turns to be a general member of a component $W_{p}$ of $V^{1}_{d}(L(-p))$; then for general $ q\in C$ the divisors of type $D-p+q$, belonging to $V^{1}_{d}(L(-p))$, lie on $W_{p}$. This 
by genericity of $D$ and $q$ is impossible,
which completes the proof of Lemma.

\end{proof}
\begin{theorem}\label{thm3}
Assume that $C$ is a smooth projective non-hyper elliptic curve of genus $g$ with $g\geq 9$.
If for some very ample line bundle $L$ on $C$ there exist integers $r$, $d$ with $d\leq h^0(L)-2$ and a component $X$ of $V^{r}_{d}(L)$ with 
$\dim X=d-r-1$, then $C$ is one of the following type:
\begin{itemize}
\item[$\bullet$] A  bi-elliptic curve,
\item[$\bullet$] A 3-gonal curve,
\item[$\bullet$] A 4-gonal curve, or
\item[$\bullet$] A space curve of degree 7.
\end{itemize}
\end{theorem}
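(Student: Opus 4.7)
The plan is to combine Lemmas \ref{lemma4} and \ref{lemma5} via a two-stage reduction (first decreasing $r$, then shrinking $L$) and then to extract the classification from the geometric situation at termination. I would start by reducing to the case $r = 1$: beginning with a component $X$ of $V^{r}_{d}(L)$ of dimension $d - r - 1$ and iterating Lemma \ref{lemma4} a total of $r - 1$ times produces a component $V$ of $V^{1}_{d'}(L)$ of dimension $d' - 2$, where $d' := d - r + 1$, and this is exactly the Martens bound of Theorem \ref{thm2}(b) for $V^{1}_{d'}(L)$.

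Next I would gradually shrink $L$. For any $p \in C$ the natural inclusion $L(-p) \subseteq L$ together with Lemma \ref{lemma3} gives $V \subseteq V^{1}_{d'}(L) \subseteq V^{1}_{d'}(L(-p))$. Whenever $L(-p)$ is still very ample, Theorem \ref{thm2}(b) bounds $\dim V^{1}_{d'}(L(-p)) \leq d' - 2$, and so $V$ persists as a maximal component of $V^{1}_{d'}(L(-p))$. Iterating with a general $p$ at each step, and using that $h^{0}(L(-p)) = h^{0}(L) - 1$ since $L$ is very ample, the procedure must terminate at a very ample line bundle $L'$ on $C$ possessing a maximal component $V$ of $V^{1}_{d'}(L')$ and satisfying exactly one of:
\begin{itemize}
\item[(i)] $h^{0}(L') = d' + 2$ and $L'(-p)$ is very ample for general $p \in C$; or
\item[(ii)] $L'(-p)$ fails to be very ample for general $p \in C$.
\end{itemize}

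Case (i) puts us exactly in the hypotheses of Lemma \ref{lemma5} (the numerical requirement $d' \geq 4$ being guaranteed by $g \geq 9$): the persistence argument of the second reduction would make $V$ a component of $V^{1}_{d'}(L'(-p))$ for general $p$, while Lemma \ref{lemma5} forbids precisely this. Hence only case (ii) can occur.

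The hard part is then to extract the list of four curve-types from (ii). Failure of very ampleness of $L'(-p)$ for general $p$ says that through a general point of $C$ in the embedding $\psi_{L'} : C \hookrightarrow \mathbf{P}_{L'}$ there passes a trisecant line (possibly a tangent-chord), i.e.\ $C$ carries a covering family of trisecants in this embedding. From such a family one has to produce either a $g^{1}_{3}$ (trigonal), a $g^{1}_{4}$ ($4$-gonal), a bi-elliptic involution, or a smooth plane sextic model whose generic projection realizes $C$ as a space curve of degree $7$. The identification that these are the only possibilities --- which is where the two extra families beyond the Mumford trichotomy of Theorem \ref{Mumford} come in --- is the place where Keem's theorem is invoked (as announced in the introduction), and the hypothesis $g \geq 9$ is exactly what is needed to satisfy its numerical bounds. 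This final geometric/classification step is the main obstacle; the first three steps are essentially formal consequences of Lemmas \ref{lemma3}, \ref{lemma4}, \ref{lemma5} and Theorem \ref{thm2}.
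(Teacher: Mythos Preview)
Your two-stage reduction (Lemma \ref{lemma4} to reach $r=1$, then shrinking $L$) and the resulting dichotomy (i)/(ii) match the paper's architecture exactly; the paper phrases the second reduction as a minimality choice (minimal $d$, then minimal $h^0(L)$ among very ample bundles realising that $d$), but your explicit descent is equivalent, and the use of Lemma \ref{lemma5} to kill case (i) is identical.

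Two concrete corrections. First, the claim that $g \geq 9$ forces $d' \geq 4$ is unjustified and in fact false: nothing prevents $d' = d - r + 1 \in \{2,3\}$. The case $d'=2$ is vacuous by very ampleness of $L$, but $d' = 3$ must be treated directly --- and it \emph{is} the heart of case (ii), since the failure of $L'(-p)$ to be very ample for general $p$ is exactly the statement $\dim V^1_3(L') = 1$. The paper folds $d'=3$ into the case-(ii) analysis rather than into case (i).

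Second, your sketch of case (ii) mislocates the mechanism. The paper does not feed the trisecant family into Keem's theorem wholesale. It takes general $D_1, D_2 \in V^1_3(L')$, first disposes of $h^0(D_i) = 2$ (trigonal), and otherwise splits on $h^0(D_1+D_2) \in \{2,3\}$. When $h^0(D_1+D_2)=2$ one builds a complete $g^3$ on $C$ as $|K(-D_1-D_2)(-p_1-\cdots-p_{g-8})|$ (this is where $g \geq 9$ first enters), and the factorisation $(\deg\phi)(\deg\phi(C)-1)=6$ of the associated map $\phi:C\to\mathbf{P}^3$ yields the trigonal\,/\,bielliptic\,/\,space-septic trichotomy directly --- there is no plane sextic in the argument, and your remark there is geometrically backwards. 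Keem's theorem (together with Coppens' extension to $g=9,10$) is invoked only in the remaining subcase $h^0(D_1+D_2)=3$, where one obtains $\dim W^2_6 \geq 0$ and concludes that $C$ is $4$-gonal.
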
   
\begin{proof}
Everywhere in this proof to simplify the notations, we shall write $V^{r}_{d}(L)$ instead of $X$. 
Consider first that using Lemma \ref{lemma4} we can reduce to the case $r=1$ and we have $\dim V^{1}_{d}(L)=d-2$.
Let $d$ be the minimum integer for which for some line bundle $K$ one has $\dim V^{1}_{d}(K)=d-2$. 
Assume moreover that $L$ is a very ample line bundle with minimum $h^{0}(L)$ among those very ample line bundles $H$, which for them $V^{1}_{d}(H)$ is of dimension $d-2$. Under these circumstances, for a general $p\in C$ either $L(-p)$ fails to be very ample or 
 $d\geq h^{0}(L(-p))-1$.  The latter case implies $h^{0}(L)=d+2$.

 In the first case; for a general $p\in C$ there are $q_{1}, q_{2}\in C$ such that $h^{0}(L(-p)(-q_{1}-q_{2}))\geq h^{0}(L(-p))-1=h^{0}(L)-2$. Therefore $h^{0}(L)-h^{0}(L(-p-q_{1}-q_{2}))\leq 2=3-1$, so $p+q_{1}+q_{2}\in V^{1}_{3}(L)$. This, according to Theorem \ref{thm2}, asserts $\dim V^{1}_{3}(L)=1$. Therefore $d\leq 3$.

 Using very ampelness of $L$ we have $V^{1}_{2}(L)=\emptyset$, excluding the case $d=2$. 
 
We proceed analyzing the case $d=3$.
 If for a $D\in V^{1}_{3}$ one has  $h^0(D)=2$ then $C$ is trigonal, while if for each $D\in V^{1}_{3}$ we had
$h^0(D)=1$ then for $D_{1}$ and $D_{2}$ in $ V^{1}_{3}$, we'll have $h^0(D_{1}+D_{2})\in \lbrace 2, 3 \rbrace$.
Therefore two subcases occur.

 If $ h^0(D_{1}+D_{2})=2$ then $h^0(K(-D_{1}-D_{2}))=g-5$. 
For general points $p_{1}, p_{2},\cdots ,p_{g-8}$ on $C$ we have $h^0(K(-2D_{1}-2D_{2})(-\Sigma p_{i}))\geq 1$, therefore $\mid D_{1}+D_{2} \mid$
is contained in $\mid K(-D_{1}-D_{2})(-\Sigma p_{i}))\mid=\mid M \mid$. Let 
$\phi: C\rightarrow \mathbf{P}^3$
be the morphism defined by $M$ and consider that the morphism 
$\phi_{\mid D_{1}+D_{2}\mid}:C\rightarrow \mathbf{P}^1$,
 defined by $\mid D_{1}+D_{2} \mid$, is obtained by composing $\phi$ with a projection from a line $L$ in $\mathbf{P}^3$ which we can assume it pass through a smooth point of $\phi(C)$. Therefore 
\[(\deg \phi)(\deg \phi (C)-1)=6.\]
Up to the last equality either $\deg \phi=1$ in which case $\phi(C)$, as well as $C$, is a space septic or; $\deg \phi=2$  in which case $\phi(C)$ is a space quartic which has to be a normal elliptic curve. Therefore $C$ is a  double covering of a
normal elliptic space curve, which is the same as to be bi-elliptic. Lastly 
 $\deg \phi=3$ and $\phi(C)$ is a space cubic curve which has to be a rational normal curve. Therefore $C$ is a  triple covering of a rational 
normal space curve, which means that $C$ is trigonal.

 If we are in the second subcase; i.e. for general $D_{1}, D_{2}\in V^{1}_{3}$ we have $ h^0(D_{1}+D_{2})=3$, then 
$\dim C^{2}_{6} \geq 2$, so 
$$\dim W^{2}_{6} \geq 0=6-2\times 2 -2. $$
Applying Keem's Theorem, see \cite[page 200]{ACGH},  and its extension by Coppens to the cases $g=9, 10$
in \cite{C}, imply that 
$C$ is a $4$-gonal curve. 

Assume finally that $d=h^{0}(L)-2$. If for general $p\in C$ the line bundle $L(-p)$ fails to be very ample or if $d=3$, then our position is the same as previous case.\\
 We exclude the other case. Assume that for general $p\in C$ the line bundle $L(-p)$ is very ample with $d\geq 4$ and consider that if $V$ is an irreducible component of $V^{1}_{d}(L)$ with maximal dimension ($d-2$), then for each $p\in C$ using Lemma (2.1) from \cite{A-S}, an irreducible component of
$V^{1}_{d}(L(-p))$ coincides on $V$.
 This, according to Lemma \ref{lemma5}, is absurd.

\end{proof}
\begin{remark}\label{Remark}
(a) Very ampleness in Theorem \ref{thm2} is necessary. To see this; for $p\in C$ set $L=K(p)$ and consider that for general points $q_{1}, q_{2}$ on $C$ and $D\in C^{1}_{g-1}$, effective divisors linearly equivalent to divisors of type 
$D+q_{1}+q_{2}-p$
belong to $V^{1}_{g}(L)$. So $\dim V^{1}_{g}(L)\geq g-1 > g-1-1$.

(b)
For curves of Mumford's type, the bound in Theorem \ref{thm3} is achieved when $L=K_{C}$, as it is well known in the literature. For very ample canonical sub-line bundles $L\subseteq K_{C}$, 
 since by Lemma \ref{lemma3} $V^{r}_{d}(L)$ contains $C^{r}_{d}$, then $\dim V^{r}_{d}(L)$
attains the bound of Theorem \ref{thm3}. 

(c)  If $C$ is bielliptic and $L$ is a very ample sub-line bundle of $K_{C}$, then Lemma \ref{lem2} together with Theorem \ref{thm2}, imply maximum dimensionality of $V^{2}_{d}(L)$, i.e. $ \dim (V^{2}_{d}(L))=d-3$.\\ 
If $L$ is a very ample line bundle which is not a sub-line bundle of the canonical bundle,
  denote by
   $\epsilon : C\rightarrow E$ 
  the elliptic involution and assume that $L$ enjoys from the property that for some $p, q \in E$ one has 
\[h^0(L)-h^0(L(-p_{1}-p_{2}-q_{1}-q_{2}))\leq 3\]
where $\epsilon^{-1}(p)=\lbrace p_{1}, p_{2} \rbrace$ and  $\epsilon^{-1}(q)=\lbrace q_{1}, q_{2} \rbrace$.
Then for $R_{1}, \cdots , R_{t}\in C$ we have 
$p_{1}+p_{2}+q_{1}+q_{2}+R_{1}+\cdots +R_{t}\in V^{1}_{t+4}(L)$ and $\dim(V^{1}_{t+4}(L))=t+2$.
 Additionally for some general $\gamma \in E$, with an extra prescribed assumption 
\[h^0(L)-h^0(L(-p_{1}-p_{2}-q_{1}-q_{2}-\gamma_{1}-\gamma_{2}))\leq 4\]
for which $\epsilon^{-1}(\gamma)=\lbrace \gamma_{1}, \gamma_{2} \rbrace$, the divisors of type
$$p_{1}+p_{2}+q_{1}+q_{2}+\gamma_{1}+\gamma_{2}+R_{1}\cdots +R_{t}$$
 belong to $V^{2}_{t+6}(L)$. This again imply maximum dimensionality of $V^{2}_{t+6}(L)$, i.e. $ \dim (V^{2}_{t+6}(L))=t+3$.

(d) Assume that $C$ is a $4$-gonal curve with $p\in C$ a base point of $K(-g^{1}_{4})$. Consider the very ample line bundle $L=K(-p)$ and observe that; divisors of type $g^{1}_{4}+q_{1}+\cdots +q_{t}$ belong to 
$V^{2}_{t+4}(L)$. Therefore $t+1 \leq \dim V^{2}_{t+4}(L)$. This using Theorem \ref{thm2} implies $\dim V^{2}_{t+4}(L)=t+1$.

(e) If $C$ is a space curve of degree $7$, then projecting from an smooth point of $C$ to $\mathbf{P}^{2}$ we obtain a plane sextic, which is singular. Therefore $C$ is birational to a $4$-gonal curve. This, by example (d) implies that $\dim V^{1}_{d}(L)$ might attain its maximum value.

\end{remark}

\end{document}